\documentclass[12pt]{amsproc}
\usepackage{fullpage}
\usepackage{amsmath}
\usepackage{amsfonts}
\usepackage{amssymb}
\usepackage{amsthm}
\usepackage[utf8]{inputenc}
\usepackage{breqn}
\usepackage{afterpage}
\usepackage{longtable}
\usepackage{indentfirst}
\usepackage{caption}
\usepackage{subcaption}
\usepackage{graphicx}
\graphicspath{ {./images/} }
\newtheorem{theorem}{Theorem}[section]
\newtheorem{lemma}[theorem]{Lemma}
\newtheorem{corollary}[theorem]{Corollary}
\newtheorem{proposition}[theorem]{Proposition}

\theoremstyle{definition}
\newtheorem{definition}[theorem]{Definition}

\newtheorem{theorem-definition}[theorem]{Theorem-Definition}
\theoremstyle{remark}

\numberwithin{equation}{section}
\usepackage{color}
\usepackage{xcolor}

\begin{document}

\title{Capacity of the $\alpha$-Brjuno-R\"ussmann set}
\begin{author}[N.~Akramov]{Nurali Akramov}
    \address{National University of Uzbekistan,  Tashkent, Uzbekistan}
\email{nurali.akramov.1996$@$gmail.com}
\end{author}

\date{}

\begin{abstract} 
In this paper, we study a class of irrational numbers defined by certain conditions on the growth of the denominators in their continued fraction expansions. We prove that the complement of this class have zero $C_\sigma$-capacity with respect to a logarithmic kernel, where the value of $\sigma>0$ determined by the parameters of the condition. In particular, our approach includes classical Brjuno set and the $\alpha$-Brjuno-R\"ussmann set.
\end{abstract} 
\maketitle
\section{Introduction}

The $\alpha$-Bruno-Rüssmann condition was introduced by A. Bounemoura and J. F\'ejoz (see \cite{ABJF1}) and important in the study quasi-periodic Hamiltonian systems, particularly relevant when analyzing persistence of invariant tori under perturbation.

Consider a vector $\omega_0\in\mathbb{R}^n$ and define the function  $\Psi_{\omega_0}:[1,+\infty)\to[\Psi_{\omega_0}(1),+\infty]$ measures the size of the so-called small denominators 
\begin{align*}
 \Psi_{\omega_0}(Q)=\max{\{|k\omega_0|^{-1}:k\in\mathbb{Z}^n,0<|k|<Q\}}.
\end{align*}
where $k\omega_0=k^1\omega^1_0+k^2\omega^2_0+...+k^n\omega^n_0$.

 Now assume that $\omega_0$ is non-resonance that is $|k\omega_0|\not=0$ for any non zero $k\in\mathbb{Z}^n$. The function $\Psi_{\omega_0}$  is non-decreasing, piecewise constant, and has a countable number of discontinuities (see \cite{ABJF1}). Then, $\alpha$-Brjuno-R\"ussmann condition defined as follows:
\begin{definition}{(see \cite{ABJF1})}\label{ABR} 
A vector $\omega_0$ is said to satisfy $\alpha$-Brjuno-R\"ussmann condition if 
\begin{align}\label{aBR}
\int_1^{+\infty}\frac{\ln{\Psi_{\omega_0}(Q)}}{Q^{1+\frac{1}{\alpha}}}dQ<\infty, \text{ }\alpha\ge1.
\end{align}
\end{definition}
This condition prevents $\Psi_{\omega_0}$ from growing too fast at large $Q$. The set of such vectors is denote by $BR_\alpha$. This set becomes smaller as $\alpha$ increases and when $\alpha=1$ the condition reduces to the classic Brjuno-R\"ussmann condition i.e. $BR_1=BR\subset BR_{\alpha}$. 

In this area significant results obtained by H. R\"ussmann, C. Chavaudret, S. Marmi, S. Fischler, A. Bounemoura and J. F\'ejoz (see \cite{HRU}, \cite{CCSM}, \cite{ABJF2}, \cite{ABJF1}) and others. 

Now, let $\nu$ be the irrational number. This number can be expressed as a continues fraction:
\begin{align*}
\nu=v_0+\frac{1}{v_1+\cfrac{1}{v_2+...\cfrac{1}{v_n+...}}}=[v_0,v_1,v_2,...,v_n,...].
\end{align*}
The finite part of this continues fraction becomes a rational number
\begin{align}
\label{PnQn}[v_0,v_1,v_2,...,v_n]=\frac{P_n}{Q_n}.
\end{align} 

For the main result, we introduce the following condition on the sequence of denominators $Q_n$ of the continued fraction expansion of the irrational number $\nu$:

\begin{equation}\label{AlBr}
\sum_{n=1}^\infty\frac{\ln^\beta{Q_{n+1}}}{Q_n^\gamma}<+\infty 
\end{equation}
where $\beta,\gamma\in\mathbb{R}^+$.

We denote the set of such numbers by $\mathcal{A}(\beta,\gamma)$. This set is monotonic with respect to the parameters $\beta$ and $\gamma$. As $\beta$ increases or $\gamma$ decreases, the set  $\mathcal{A}(\beta,\gamma)$ contracts. Conversely, as $\beta$ decreases or $\gamma$ increases, the set  $\mathcal{A}(\beta,\gamma)$ expands.
When $\beta=1$ and $\gamma=1$, the condition reduces to the classic Brjuno condition in $\mathbb{C}$ (see \cite{ABR}). 

In earlier work, A. Sadullaev and K. Rakhimov (see \cite{AS1}) studied $C_\sigma$-capacity and $h$-Hausdorff measure of the complement of the Brjuno set in $\mathbb{C}$. Definition of these concepts are provided in Section 2. Our main results generalize their work by considering the condition \eqref{AlBr}. The relationship between the set $\mathcal{A}(\beta,\gamma)$  and the set $BR_\alpha$ established in Section 2.

\begin{theorem}\label{th1} 
Let $\beta>0$ and $\gamma>0$. If $\gamma\le2$, then the complement of the set $\mathcal{A}(\beta,\gamma)$ has zero $C_\sigma$-capacity with respect to the kernel  \begin{align*}
k_1(z,\xi) = |\ln|z - \xi||^{\frac{2\beta}{\gamma}}.
\end{align*}
In particular, it has zero $h$-Hausdorff measure with respect to the function $h(t)=\ln^{-\frac{2\beta}{\gamma}}\frac{1}{t}$.
 
 If $\gamma>2$, then the complement of the set $\mathcal{A}(\beta,\gamma)$ has zero $C_\sigma$-capacity with respect to the kernel  
\begin{align*}
k_2(z,\xi) = |\ln|z - \xi||^{\beta}.
\end{align*}
In particular, it has zero $h$-Hausdorff measure with respect to the function $h(t)=\ln^{-{\beta}}\frac{1}{t}$.
\end{theorem}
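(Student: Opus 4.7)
The plan is to cover $\mathcal{A}(\beta,\gamma)^c$ explicitly by small disks around rationals $p/q$ supplied by continued-fraction convergents, and then bound the resulting Hausdorff/capacitary sums; the $C_\sigma$-capacity bound then follows from the standard comparison between capacity and Hausdorff measure for logarithmic kernels. The mechanism is the classical Diophantine inequality $|\nu-P_n/Q_n|\le 1/(Q_n Q_{n+1})$, which converts a lower bound on $Q_{n+1}(\nu)$ into proximity of $\nu$ to a rational of denominator $Q_n(\nu)$.

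First, I would translate failure of the defining condition into an approximation statement. Since the convergent denominators grow at least geometrically, $\sum_n Q_n^{-\epsilon}<\infty$ for every $\epsilon>0$; hence divergence of $\sum_n\ln^\beta Q_{n+1}/Q_n^\gamma$ forces, for every $\delta>0$, infinitely many indices $n$ with $\ln Q_{n+1}(\nu)\ge Q_n(\nu)^{\gamma/\beta-\delta}$ (else $\ln^\beta Q_{n+1}/Q_n^\gamma\le Q_n^{-\delta\beta}$ eventually, a summable bound). Combined with the Diophantine inequality,
\[
\mathcal{A}(\beta,\gamma)^c\ \subseteq\ \bigcap_{Q_0\ge 1}\ \bigcup_{q\ge Q_0}\ \bigcup_{\substack{0\le p\le q\\ \gcd(p,q)=1}}\ B\!\left(\tfrac{p}{q},\,r_q\right), \qquad r_q\approx\frac{1}{q\exp(q^{\gamma/\beta-\delta})}.
\]

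Next, I would estimate the total weight of this covering. For the kernel $|\ln|z-\xi||^s$, both the $C_\sigma$-capacity of a disk of radius $r$ and the $h$-weight with $h(t)=\ln^{-s}(1/t)$ scale like $(\ln(1/r))^{-s}$. With $\ln(1/r_q)\sim q^{\gamma/\beta-\delta}$ and at most $q$ reduced fractions of each denominator $q$, the total weight is bounded by
\[
\sum_{q\ge Q_0} q\,(\ln(1/r_q))^{-s}\ \lesssim\ \sum_{q\ge Q_0} q^{\,1-s(\gamma/\beta-\delta)},
\]
which tends to $0$ as $Q_0\to\infty$ whenever $s(\gamma/\beta-\delta)>2$. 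In the regime $\gamma>2$ with $s=\beta$ the condition $s(\gamma/\beta-\delta)>2$ holds by taking $\delta$ small, giving the second assertion of the theorem. In the regime $\gamma\le 2$ with $s=2\beta/\gamma$ one reaches the marginal bound $\sum q^{-1+O(\delta)}$, so the direct estimate is exactly critical.

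The main obstacle is therefore the borderline case $\gamma\le 2$, where achieving the precise exponent $s=2\beta/\gamma$ requires more than the naive radial cover. To push through I would refine the covering dyadically: stratify the bad events by the dyadic range $Q_{n+1}(\nu)\in[T,2T)$, use radii $r_{q,T}=1/(qT)$ on each piece, and exploit the full summability of the defining series (not only its weaker limsup consequence) to control $\sum_{q,T} q\,(\ln(qT))^{-s}$. A second concern is to compare the resulting covering estimate to the precise definition of $C_\sigma$-capacity from Section~2; this is expected to amount to countable subadditivity together with the standard equivalence between capacity zero and Hausdorff zero for logarithmic kernels, but it must be verified against the specific normalization used there.
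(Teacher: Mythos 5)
Your covering strategy is genuinely different from the paper's argument, and in the non-critical regime it works. For $\gamma>2$ your cover by balls $B(p/q,r_q)$ with $r_q=1/(q\exp(q^{\gamma/\beta-\delta}))$ gives total $h$-weight $\sum_q q^{1-\gamma+\beta\delta}\to0$, hence $H^h=0$, and capacity zero then follows from the standard implication ``$H^h(E)=0$ with $h=1/k$ forces $C_k(E)=0$'' (Carleson/Landkof). Be aware that this is the \emph{converse} of the comparison listed as property 3 in Section 2, so it must be quoted separately; the paper avoids the issue entirely by fixing the measure $\mu=\sum_{q,p}q^{-2-\varepsilon/4}\delta_{p/q}$ once and for all and showing its potential is identically $+\infty$ on the complement, which yields capacity zero directly from property 2 and then Hausdorff measure zero afterwards.

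The critical case $\gamma\le2$ --- which contains the classical Brjuno case $\beta=\gamma=1$ and is the main content of the theorem --- is a genuine gap, and the dyadic refinement you sketch cannot close it. The only consequence of divergence your cover uses is the limsup statement ``$\ln Q_{n+1}\ge Q_n^{\gamma/\beta-\delta}$ infinitely often,'' so what you are really covering is the limsup set $W_\delta=\limsup_{q}\bigcup_p B(p/q,r_q)$, which strictly contains $\mathbb{R}\setminus\mathcal{A}(\beta,\gamma)$. For the gauge $h(t)=\ln^{-2\beta/\gamma}\frac{1}{t}$ one has $\sum_q q\,h(r_q)\asymp\sum_q q^{-1+2\beta\delta/\gamma}=+\infty$, and by the divergence half of Jarn\'ik's theorem (mass transference) this forces $H^h(W_\delta)=+\infty$. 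Consequently no assignment of radii within the limsup framework, dyadic or otherwise, can produce covers of small total $h$-weight: your covering sets also cover $W_\delta$, a set of infinite $h$-measure. The same computation kills the stratified version, since $\sum_{q,T}q(\ln(qT))^{-s}$ over all admissible dyadic pairs diverges and essentially every pair $(q,T)$ is realized by some $\nu$ in the complement. Reaching the exponent $2\beta/\gamma$ requires using the divergence of the \emph{entire} series $\sum_n\ln^\beta Q_{n+1}/Q_n^\gamma$, i.e.\ the simultaneous contribution of all convergents of a given $\nu$; this is exactly what the paper's potential-theoretic argument retains (the potential of $\mu$ at $\nu$ dominates the whole series $\sum_n|\ln|\nu-P_n/Q_n||^{s}/Q_n^{2+\varepsilon/4}$, whose divergence is secured by the H\"older-inequality Lemma 3.1) and exactly what a one-ball-per-point covering argument discards.
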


\section{Preliminary}
\subsection{Some properties of the continued fraction} Let $\nu$ be irrational number and the finite part of continued fraction of $\nu$ as a fraction $\frac{P_n}{Q_n}$ be as in \eqref{PnQn}. We will use the following properties of the sequence of fraction $\frac{P_n}{Q_n}$ for any $n\ge1$ (see \cite{SMA}).
\begin{enumerate}
\item  Denominator $Q_n$ satisfies 
\begin{align*}
Q_n>\frac{1}{2}\cdot \left(\frac{\sqrt{5}+1}{2}\right)^{n-1},
\end{align*}
so in particular sequence $Q_n$ grows exponentially.
\item We have classic two-sided inequality:
\begin{align*}
\frac{1}{2Q_nQ_{n+1}}<\left|\nu-\frac{P_n}{Q_n}\right|<\frac{1}{Q_nQ_{n+1}}.
\end{align*}
\end{enumerate}

\subsection{Connection between $BR_\alpha$ and $\mathcal{A}(\beta,\gamma)$}
We now prove the following proposition concerning the  complement of the $\alpha$-Brjuno R\"ussmann set and the complement of the $\mathcal{A}(\beta,\gamma)$ set.
\begin{proposition} If 
\begin{align*}
\sum_{n=1}^\infty\frac{\ln^\beta{Q_{n+1}}}{Q_n^\gamma}=+\infty,
\end{align*}
for $\beta=1$, $\gamma=1+\frac{1}{\alpha}$ and $\alpha\ge1$, then
\begin{align*}
\int_1^{+\infty}\frac{\ln{\Psi_{\omega_0}(Q)}}{Q^{1+\frac{1}{\alpha}}}dQ=+\infty.
\end{align*}
\end{proposition}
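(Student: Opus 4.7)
The plan is to construct an explicit $\omega_0$ from the continued fraction data of $\nu$ and then transfer the series divergence to the integral divergence. Take $\omega_0=(1,\nu)\in\mathbb{R}^2$ (after translating $\nu$ into $(0,1)$, which costs no generality since the $Q_n$ depend only on the fractional part) and $k_n=(-P_n,Q_n)\in\mathbb{Z}^2$, so that $|k_n|=Q_n$. By property~(2) of the preliminaries,
\begin{equation*}
|k_n\omega_0|=|Q_n\nu-P_n|<\frac{1}{Q_{n+1}}.
\end{equation*}
For every $Q\in(Q_n,Q_{n+1}]$ the vector $k_n$ is admissible in the definition of $\Psi_{\omega_0}(Q)$, hence $\ln\Psi_{\omega_0}(Q)\geq\ln Q_{n+1}$ on that interval.

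Next I would partition the integral along the disjoint intervals $(Q_n,Q_{n+1}]$ for $n\geq n_0$ and use
\begin{equation*}
\int_{Q_n}^{Q_{n+1}}\frac{dQ}{Q^{1+1/\alpha}}=\alpha\bigl(Q_n^{-1/\alpha}-Q_{n+1}^{-1/\alpha}\bigr)
\end{equation*}
to obtain the lower bound
\begin{equation*}
\int_1^{+\infty}\frac{\ln\Psi_{\omega_0}(Q)}{Q^{1+1/\alpha}}\,dQ\ \geq\ \alpha\sum_{n\geq n_0}\ln Q_{n+1}\bigl(Q_n^{-1/\alpha}-Q_{n+1}^{-1/\alpha}\bigr).
\end{equation*}

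The main obstacle is that the factor $Q_n^{-1/\alpha}-Q_{n+1}^{-1/\alpha}$ does not dominate $Q_n^{-1-1/\alpha}$ uniformly in $n$, since it collapses when the ratio $Q_{n+1}/Q_n$ is close to $1$. I would resolve this with a dichotomy on that ratio. For \emph{good} indices with $Q_{n+1}\geq 2Q_n$,
\begin{equation*}
Q_n^{-1/\alpha}-Q_{n+1}^{-1/\alpha}\geq(1-2^{-1/\alpha})\,Q_n^{-1/\alpha}\geq(1-2^{-1/\alpha})\,Q_n^{-1-1/\alpha},
\end{equation*}
so the integral contribution from good $n$ exceeds a constant multiple of the corresponding series term $\ln Q_{n+1}/Q_n^{1+1/\alpha}$. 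For \emph{bad} indices with $Q_{n+1}<2Q_n$ one has $\ln Q_{n+1}\leq\ln 2+\ln Q_n$, and the geometric lower bound $Q_n\geq\tfrac{1}{2}\bigl(\tfrac{\sqrt{5}+1}{2}\bigr)^{n-1}$ from property~(1) makes $\sum_n(\ln Q_n)/Q_n^{1+1/\alpha}$ finite. Hence the divergence of $\sum_n\ln Q_{n+1}/Q_n^{1+1/\alpha}$ is forced onto the good indices, and the lower bound above then forces the integral to diverge.
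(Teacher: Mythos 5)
Your proof is correct, but it follows a genuinely different route from the paper's. The paper does not estimate the integral directly: it invokes the equivalence (cited from Bounemoura--F\'ejoz) between the integral condition and the discrete series $\sum_{Q\ge1}\ln\Psi_\omega(Q)/Q^{1+1/\alpha}$, and then bounds that series from below simply by discarding all terms except those with $Q=Q_n$, each of which contributes exactly $\ln Q_{n+1}/Q_n^{1+1/\alpha}$ by Property~2. That one-term-per-block selection sidesteps entirely the collapse problem you identified, at the cost of leaning on an external equivalence result. Your argument instead works with the integral itself, partitions it over the blocks $(Q_n,Q_{n+1}]$, and handles the genuine difficulty that $Q_n^{-1/\alpha}-Q_{n+1}^{-1/\alpha}$ degenerates when $Q_{n+1}/Q_n$ is near $1$ via the good/bad dichotomy, with the bad indices absorbed by the exponential growth of $Q_n$ from Property~1. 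What your approach buys is self-containedness --- no appeal to the series--integral equivalence is needed --- and an explicit verification that the convergent vectors $k_n=(-P_n,Q_n)$ are admissible competitors in the definition of $\Psi_{\omega_0}$, which the paper leaves implicit in its formula $\Psi(Q)=|\nu Q_n-P_n|^{-1}$ on $[Q_n,Q_{n+1}]$. What the paper's approach buys is brevity, and a cleaner reduction in which the divergent series appears verbatim rather than only after isolating the good indices. Both arguments are sound.
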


\begin{proof}
Let $\omega=(1,\nu)\in\mathbb{R}^2$ non-resonant and $\frac{P_n}{Q_n}$ is continued fraction of $\nu$. Equivalence of $\alpha$-Brjuno-R\"ussmann condition and 
\begin{align*}
\sum_{Q=1}^\infty\frac{\ln{\Psi_\omega(Q)}}{Q^{1+\frac{1}{\alpha}}}<+\infty
\end{align*}
is established in \cite{ABJF1}. 

Now assume that $\Psi(Q)$ defined as follows (see \cite{ABJF2})
\begin{align*}
    \Psi(Q)=|\nu Q_n-P_n|^{-1}, \text{ }Q_n\le Q\le Q_{n+1}.
\end{align*}

By Property 2 of the continued fractions, we have
\begin{align*}
    \sum_{Q=1}^\infty\frac{\ln{\Psi_\omega(Q)}}{Q^{1+\frac{1}{\alpha}}}=\sum_{Q=1}^\infty\frac{\ln{|\nu Q_n-P_n|^{-1}}}{Q^{1+\frac{1}{\alpha}}}>\sum_{\text{ }Q\not=Q_n}\frac{\ln{Q_{n+1}}}{Q^{1+\frac{1}{\alpha}}}+\sum_{n=1,\text{ }Q=Q_n}^\infty\frac{\ln{Q_{n+1}}}{Q_n^{1+\frac{1}{\alpha}}}>\sum_{n=1}^\infty\frac{\ln{Q_{n+1}}}{Q_n^{1+\frac{1}{\alpha}}}.
\end{align*}   
\end{proof}

\subsection{$C_\sigma$- capacity}
Capacity is one of the main concepts in potential theory. The capacity of the Brjuno set is the one of the connection points between potential theory and complex dynamical systems. 

Let  $K\Subset\{|z|<1\}$ be a compact set. Without loss of generality we may consider the kernel
{$k_\sigma(z, \xi) =|\ln|z-\xi||^{\sigma}$} with $\sigma>0$. Then, the potential $U^\mu(z)$ for the measure $\mu$ at a point $z\in\mathbb{C}$ is defined as
\begin{align*}
U^\mu(z)=\int_Kk_\sigma(z,\xi)d\mu(\xi),\text{ }\mu \in \mathring{M}_K^+
\end{align*}
where $\mathring{M}_K^+$ is a set of positive Borel probability measure supported on $K$ with $|\mu| = 1$.

Let 
\begin{align*}
I(\mu)=\int_KU^\mu(z)d\mu(z)=\iint_{K \times K} k_\sigma(z, \xi) \, d\mu(z) \, d\mu(\xi)
\end{align*}

and $W_\sigma(K)=\inf\{I(\mu):\mu\in\mathring{M}_K^+\}.$
Then, $C_\sigma$-capacity of the set $K$ is defined as

\begin{align*}
C_\sigma(K) =\frac{1}{W_\sigma(K)}=\left(\inf_{\mu \in \mathring{M}_K^+}\int_KU^\mu(z)d\mu(z)\right)^{-1}= \left( \inf_{\mu \in \mathring{M}_K^+} \iint_{K \times K} k_\sigma(z, \xi) \, d\mu(z) \, d\mu(\xi) \right)^{-1}.
\end{align*}

We note the following properties of the $C_\sigma$-capacity (see \cite{NSL2}, \cite{LCA})
\begin{enumerate}
    \item For any Borel set $E\subset\mathbb{C}^n$, the outer and inner $C_\sigma$-capacities coincides: \begin{align*}
    \overline{C}_\sigma(E)=\underline{C}_\sigma(E)=C_\sigma(E).
    \end{align*}
    \item $C_\sigma(E)=0$ if and only if there exists a finite Borel measure $\mu$ supported on $E$ such that its potential satisfies $U^\mu(z)\equiv+\infty$ for all $z\in{E}$.
    \item  If $C_\sigma(E)=0$, then the $h_{\delta}$-Hausdorff measure  of $E$ with  gauge function $h(t)=\log^{-\delta}{\frac{1}{t}}$ is zero for any $\delta>\sigma$ (see \cite{NSL1}).
    \item For any sequence of compact sets $K_j$,
\begin{align*}
C_\sigma\bigg(\bigcup_{j=1}^\infty K_j\bigg)\le\sum_{j=1}^{\infty}C_\sigma(K_j).
\end{align*}
\end{enumerate}
\subsection{$h$-Hausdorff measure}
Let  $h:[0,r_0]\to [0,+\infty)$ be a gauge function - continuous, strictly increasing function with $h(0)=0$ and $r_0>0$. For a bounded subset $E\subset\mathbb{R}^{n}$ and any  $0<\varepsilon<r_0$, consider a finite covering of $E$ by open balls $B_j(x_j, r_j)$ such that $r_j < \varepsilon$. Define 
\begin{align*}
H^{h}(E,\varepsilon)=\inf\left\{\sum_{j=1}^m h(r_j):\text{ } \bigcup_{j=1}^m{B_j}\supset E, \ \ r_j < \varepsilon\right\}
\end{align*}
for all $1 \le j \le m$, where $m$ depends on the chosen cover.
It is obvious that,  $H^{h}(E,\varepsilon)$ non-decreasing as $\varepsilon$ decreases. Hence, the limit
\begin{align*}
H^{h}(E)=\lim_{\varepsilon\to0+}H^{h}(E,\varepsilon)
\end{align*}
exists and defines the \textit{$h$-Hausdorff measure} of $E$.  When $h_{\delta}(t)=t^\delta$ for  $\delta>0$, it is called the classic $\delta$-dimensional Hausdorff measure of $E$.

\section{Capacity of the complement of the set $\mathcal{A}(\beta,\gamma)$}
Firstly, we need the following lemma.
\begin{lemma}\label{lemma} 
If
\begin{align*}
\sum_{n=1}^\infty\frac{\ln^\beta{Q_{n+1}}}{Q_n^\gamma}=+\infty
\end{align*}
where $\beta>0$ and $\gamma<2+\varepsilon$ for any $\varepsilon>0$. Then
\begin{align*}
\sum_{n=1}^\infty\frac{\ln^{\beta\frac{2+\varepsilon}{\gamma}}{Q_{n+1}}}{Q_n^{2+\frac{\varepsilon}{4}}}=+\infty.
\end{align*}
\end{lemma}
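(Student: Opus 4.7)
The strategy is a dichotomy on the indices: split them according to whether $\ln Q_{n+1}$ is small or large compared with $Q_n^{\delta}$ for an appropriately chosen $\delta>0$. Writing $a_n=\ln^{\beta} Q_{n+1}/Q_n^{\gamma}$ and $b_n=\ln^{\beta(2+\varepsilon)/\gamma}Q_{n+1}/Q_n^{2+\varepsilon/4}$, a direct computation gives
\[
\frac{b_n}{a_n}=\frac{(\ln Q_{n+1})^{\beta\tau}}{Q_n^{2+\varepsilon/4-\gamma}},\qquad \tau=\frac{2+\varepsilon-\gamma}{\gamma}>0,
\]
the positivity of $\tau$ being exactly the assumption $\gamma<2+\varepsilon$. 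The plan is to show that the divergence of $\sum a_n$ must be supported on indices where $\ln Q_{n+1}$ is large, and that on such indices the ratio $b_n/a_n$ tends to $+\infty$.

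Concretely, I would choose $\delta>0$ satisfying simultaneously $\beta\delta<\gamma$ and $\beta\tau\delta>2+\varepsilon/4-\gamma$. The existence of such a $\delta$ is exactly where the slack $\varepsilon/4<\varepsilon$ is spent: the second inequality is vacuous when $\gamma\ge 2+\varepsilon/4$, and otherwise reads $\delta>(2+\varepsilon/4-\gamma)\gamma/[\beta(2+\varepsilon-\gamma)]$, a quantity strictly less than the upper bound $\gamma/\beta$ precisely because $\varepsilon/4<\varepsilon$. Partition $\mathbb{N}=S_1\cup S_2$ with $S_1=\{n:\ln Q_{n+1}\le Q_n^{\delta}\}$. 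On $S_1$ one has $a_n\le Q_n^{\beta\delta-\gamma}$, and Property~1 of Subsection~2.1 gives a geometric lower bound on $Q_n$, so $\sum_{n\in S_1}a_n$ is dominated by a convergent geometric series. The assumed divergence of $\sum a_n$ therefore forces $\sum_{n\in S_2}a_n=+\infty$, and in particular $S_2$ is infinite.

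For $n\in S_2$ the defining inequality $\ln Q_{n+1}>Q_n^{\delta}$ yields $b_n/a_n>Q_n^{\beta\tau\delta-(2+\varepsilon/4-\gamma)}$ when $\gamma<2+\varepsilon/4$, and the weaker but still sufficient bound $b_n/a_n\ge(\ln Q_{n+1})^{\beta\tau}$ when $\gamma\ge 2+\varepsilon/4$. In either case the choice of $\delta$ makes $b_n/a_n\to+\infty$ along $S_2$, so $b_n\ge a_n$ for all sufficiently large $n\in S_2$ and
\[
\sum_{n=1}^{\infty} b_n\ \ge\ \sum_{\substack{n\in S_2\\ n\ \mathrm{large}}} a_n\ =\ +\infty.
\]
The one delicate point, and the main obstacle, is the simultaneous choice of $\delta$ above; once that is in hand, the exponential growth of $(Q_n)$ takes care of everything else.
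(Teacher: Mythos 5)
Your proof is correct, but it follows a genuinely different route from the paper's. The paper disposes of the lemma in one stroke with H\"older's inequality: it picks $\delta>0$ with $(1-\delta)(2+\varepsilon)\ge 2+\tfrac{\varepsilon}{4}$, writes $\frac{\ln^{\beta}Q_{n+1}}{Q_n^{\gamma}}=\frac{\ln^{\beta}Q_{n+1}}{Q_n^{(1-\delta)\gamma}}\cdot Q_n^{-\delta\gamma}$, and applies H\"older with exponents $\frac{2+\varepsilon}{\gamma}$ and $\frac{2+\varepsilon}{2+\varepsilon-\gamma}$, so that the divergent hypothesis series is bounded by $\bigl(\sum_n\ln^{\beta(2+\varepsilon)/\gamma}Q_{n+1}/Q_n^{(1-\delta)(2+\varepsilon)}\bigr)^{\gamma/(2+\varepsilon)}$ times a second factor that converges by the exponential growth of $Q_n$; the first factor is therefore forced to be infinite, and it is dominated by the target series because $(1-\delta)(2+\varepsilon)\ge 2+\tfrac{\varepsilon}{4}$. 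Your dichotomy on whether $\ln Q_{n+1}$ exceeds $Q_n^{\delta}$ is sound: the window $\bigl(\frac{(2+\varepsilon/4-\gamma)\gamma}{\beta(2+\varepsilon-\gamma)},\frac{\gamma}{\beta}\bigr)$ for $\delta$ is nonempty precisely because $\tfrac{\varepsilon}{4}<\varepsilon$, the $S_1$-part converges geometrically by Property 1 of Subsection 2.1, and on $S_2$ the ratio $b_n/a_n$ tends to infinity, so the divergent tail of $\sum_{n\in S_2}a_n$ is eventually dominated by $\sum b_n$. The H\"older route buys brevity and a single, case-free choice of $\delta$; your route is more elementary and makes explicit where the divergence of the hypothesis series must be concentrated. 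One remark: the paper's proof of Lemma \ref{lemma} also appends the passage from $\ln Q_{n+1}$ to $\bigl|\ln\bigl|\nu-\tfrac{P_n}{Q_n}\bigr|\bigr|$ via Property 2 of the continued fractions; that step really belongs to the proof of Theorem \ref{th1} rather than to the lemma as stated, so your proof is not missing anything by stopping at the stated conclusion.
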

\begin{proof}
    
Let $\delta>0 $ be so small that $(1-\delta){(2+\varepsilon)}\ge2+\cfrac{\varepsilon}{4}$. Then, by using the H\"older inequality to the $\mathcal{A}(\beta, \gamma)$ condition, we have
\begin{equation}\label{Ho}
\sum_{n=1}^\infty\frac{\ln^\beta{Q_{n+1}}}{Q_n^\gamma}\le\left(\sum_{n=1}^\infty\frac{\ln^{{\beta\frac{2+\varepsilon}{\gamma}}}{Q_{n+1}}}{Q_n^{(1-\delta)(2+\varepsilon)}}\right)^{\frac{\gamma}{2+\varepsilon}}\left(\sum_{n=1}^\infty\frac{1}{Q_n^{\frac{\delta\gamma(2+\varepsilon)}{2+\varepsilon-\gamma}}}\right)^{\frac{2+\varepsilon-\gamma}{2+\varepsilon}}.
\end{equation}

By the property 1 of the continues fractions, it is obvious that the second series on the right-hand side converges. Since, $(1-\delta){(2+\varepsilon)}\ge2+\cfrac{\varepsilon}{4}$, we have

\begin{align*}
\sum_{n=1}^\infty\frac{\ln^{\beta\frac{2+\varepsilon}{\gamma}}{Q_{n+1}}}{Q_n^{(1-\delta){(2+\varepsilon)}}}<\sum_{n=1}^\infty\frac{\ln^{\beta\frac{2+\varepsilon}{\gamma}}{Q_{n+1}}}{Q_n^{{2+\frac{\varepsilon}{4}}}}.
\end{align*}
 
Therefore, by inequality \ref{Ho}, we have

\begin{align*}
\sum_{n=1}^\infty\frac{\ln^{\beta\frac{2+\varepsilon}{\gamma}}{Q_{n+1}}}{Q_n^{{2+\frac{\varepsilon}{4}}}}=+\infty.
\end{align*}

Hence, by the property 2 of the continued fractions

\begin{align*}
\sum_{n=1}^\infty\frac{\left|\ln^{\beta\frac{2+\varepsilon}{\gamma}}\left|\nu-\frac{P_n}{Q_n}\right|\right|}{Q_n^{2+\frac{\varepsilon}{4}}}\ge\sum_{n=1}^\infty\frac{\ln^{\beta\frac{2+\varepsilon}{\gamma}}{Q_nQ_{n+1}}}{Q_n^{2+\frac{\varepsilon}{4}}}>\sum_{n=1}^\infty\frac{\ln^{\beta\frac{2+\varepsilon}{\gamma}}{Q_{n+1}}}{Q_n^{2+\frac{\varepsilon}{4}}}=+\infty.
\end{align*}
\end{proof}
Now, we going to prove our main results.
\begin{proof}[Proof of Theorem \ref{th1}] 
To prove the main result by the second property of $C_\sigma$ capacity, it is enough to show that $U^\mu(\nu)=+\infty$ for any compact $ K\subset\mathbb{R}\setminus\mathcal{A}(\beta,\gamma)$ where $\beta>0$, $\gamma>0$. For this we use the following Borel measure
\begin{align}{\label{measure}}
\mu=\sum_{q=1}^\infty\sum_{p=1}^{q-1}\frac{\delta_{\frac{p}{q}}}{q^{2+\frac{\varepsilon}{4}}},\text{ }\varepsilon>0    
\end{align}
where $\delta_{\frac{p}{q}}$ is a Dirac measure supported at the point $\frac{p}{q}$. This measure is finite.

If $\gamma\le2$, the potential of $\mu$ with respect to the kernel $k_1(z,\xi)=|\ln|z-\xi||^{\frac{2\beta}{\gamma}}$ is

\begin{align*}
U^\mu_1(z)=\int{k_1(z,\xi)}d\mu(\xi)=\sum_{q=1}^\infty\sum_{p=1}^{q-1}\frac{1}{q^{2+\frac{\varepsilon}{4}}}\left|\ln{\left|z-\frac{p}{q}\right|}\right|^{\beta\frac{2+\varepsilon}{\gamma}}.
\end{align*}

We need to show that $U^\mu_1(z)=+\infty$ for any $z\in K$. Indeed, if $z\in K$ is rational number, it is clean. In the case, when $z$ is irrational number we will use the following
\begin{align*}
&U^\mu_1(\nu)=\sum_{q=1}^\infty\sum_{p=1}^{q-1}\frac{1}{q^{2+\frac{\varepsilon}{4}}}\left|ln\left|\nu-\frac{p}{q}\right|\right|^{\beta\frac{2+\varepsilon}{\gamma}}=\sum_{n=1}^\infty\frac{\left|\ln{\left|\nu-\frac{P_n}{Q_{n}}\right|}\right|^{\beta\frac{2+\varepsilon}{\gamma}}}{Q_n^{2+\frac{\varepsilon}{4}}}+\\
&\sum_{n=1}^\infty\sum_{p\not=P_n,\text{ }p=1}^{Q_n-1}\frac{1}{Q_n^{2+\frac{\varepsilon}{4}}}\left|ln\left|\nu-\frac{p}{Q_n}\right|\right|^{\beta\frac{2+\varepsilon}{\gamma}}+\sum_{q=1}^\infty\sum_{p\not=P_n,\text{ }p=1}^{q-1}\frac{1}{q^{2+\frac{\varepsilon}{4}}}\left|ln\left|\nu-\frac{p}{q}\right|\right|^{\beta\frac{2+\varepsilon}{\gamma}}\\
&\ge\sum_{n=1}^\infty\frac{\left|\ln{\left|\nu-\frac{P_n}{Q_{n}}\right|}\right|^{\beta\frac{2+\varepsilon}{\gamma}}}{Q_n^{2+\frac{\varepsilon}{4}}}.
\end{align*}

Therefore, by the lemma \ref{lemma} we can conclude that

\begin{align*}
U_1^\mu(\nu)=+\infty
\end{align*}
when $\beta>0$ and $\gamma<2+\varepsilon$ for any $\varepsilon>0$.

If $\gamma>2$, we will use the same measure \ref{measure} and the kernel 
\begin{align*}
k_2(z,\xi)=|\ln|z-\xi||^\beta.
\end{align*}
Then the potential is given by  
\begin{align*}
U^\mu_2(\nu)=\sum_{q=1}^\infty\sum_{p=1}^{q-1}\frac{1}{q^{2+\frac{\varepsilon}{4}}}\left|ln\left|\nu-\frac{p}{q}\right|\right|^{\beta}\ge\sum_{n=1}^\infty\frac{\left|\ln{\left|\nu-\frac{P_n}{Q_{n}}\right|}\right|^{\beta}}{Q_n^{2+\frac{\varepsilon}{4}}}.
\end{align*}

As in the previous proof, using property 2 of continued fraction, we obtain 
\begin{align*}
\sum_{n=1}^\infty\frac{\left|\ln{\left|\nu-\frac{P_n}{Q_{n}}\right|}\right|^{\beta}}{Q_n^{2+\frac{\varepsilon}{4}}}>\sum_{n=1}^\infty\frac{\ln^{\beta}{Q_{n+1}}}{Q_n^{2+\frac{\varepsilon}{4}}}>\sum_{n=1}^\infty\frac{\ln^{\beta}{Q_{n+1}}}{Q_n^\gamma}=+\infty.
\end{align*}
Therefore, $C_\sigma(K)=0$ for the set of real numbers that does not satisfy $\mathcal{A}(\beta, \gamma)$ condition for any $\beta>0$ and $\gamma>0$, so $C_\sigma(\mathbb{R} \setminus \mathcal{A}(\beta,\gamma)) = 0$.
The last assertion follows from property 4 of $C_\sigma$-capacity. 
\end{proof}
Special case of Theorem \ref{th1} when $\beta=1$ and $\gamma=1+\frac{1}{\alpha}$ for $\alpha\ge1$ is given by the following corollary.
\begin{corollary} The complement of the $\alpha$-Brjuno-R\"ussmann set has zero $C_\sigma$-capacity with respect to the kernel
$$ k_\sigma(z,\xi)=|\ln{|z-\xi|}|^{\frac{2\alpha}{\alpha+1}}$$
for any $\alpha\ge1$. In particular, it has zero $h$-Hausdorff measure with respect to the function $h(t)=\ln^{-\frac{2\alpha}{\alpha+1}}\frac{1}{t}$.
\end{corollary}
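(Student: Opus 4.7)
The plan is to obtain this corollary as a direct specialization of Theorem~\ref{th1} with $\beta=1$ and $\gamma=1+\tfrac{1}{\alpha}$, the parameter values linked to the $\alpha$-Brjuno-R\"ussmann condition by the proposition of Section~2.2.

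First I would verify which branch of Theorem~\ref{th1} is active. Since $\alpha\ge 1$, we have $\gamma=1+\tfrac{1}{\alpha}\in(1,2]$, so $\gamma\le 2$ and the first branch of the theorem applies. A direct substitution into the kernel exponent gives
\[
\frac{2\beta}{\gamma}\;=\;\frac{2}{1+\tfrac{1}{\alpha}}\;=\;\frac{2\alpha}{\alpha+1},
\]
which matches both the kernel $k_\sigma(z,\xi)=|\ln|z-\xi||^{2\alpha/(\alpha+1)}$ and the gauge $h(t)=\ln^{-2\alpha/(\alpha+1)}(1/t)$ appearing in the corollary.

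Applying the first branch of Theorem~\ref{th1} with these parameters yields that $\mathbb{R}\setminus\mathcal{A}(1,1+\tfrac{1}{\alpha})$ has zero $C_\sigma$-capacity with respect to $k_\sigma$ and zero $h$-Hausdorff measure for the stated gauge. The proposition of Section~2.2, together with the equivalence between the continuous integral and the discrete form of the $\alpha$-Brjuno-R\"ussmann condition recalled from \cite{ABJF1}, identifies the $\alpha$-Brjuno-R\"ussmann set (restricted to $\omega=(1,\nu)$) with $\mathcal{A}(1,1+\tfrac{1}{\alpha})$ on the $\nu$-axis, so these conclusions transfer directly to $\mathbb{R}\setminus BR_\alpha$.

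The main step is essentially bookkeeping: the substantive work is already carried out in Theorem~\ref{th1} and in the proposition of Section~2.2. What remains is to substitute correctly into the kernel and gauge exponents, verify that the correct branch of Theorem~\ref{th1} is being invoked, and check that the identification furnished by the proposition lines up with the direction needed so that monotonicity of $C_\sigma$-capacity and $h$-Hausdorff measure under set inclusion gives the desired conclusion.
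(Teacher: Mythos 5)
Your specialization of Theorem \ref{th1} is exactly the route the paper takes (the paper offers no further argument for the corollary), and the bookkeeping is correct: $\gamma=1+\frac{1}{\alpha}\in(1,2]$ selects the first branch and $\frac{2\beta}{\gamma}=\frac{2\alpha}{\alpha+1}$ matches the stated kernel and gauge. The gap sits precisely at the step you defer to a final ``check'': the direction of the set comparison. The proposition of Section~2.2 proves only that divergence of $\sum_n \ln Q_{n+1}/Q_n^{1+1/\alpha}$ forces divergence of the Brjuno--R\"ussmann integral; in set terms this is $BR_\alpha\subset\mathcal{A}(1,1+\frac{1}{\alpha})$, equivalently $\mathbb{R}\setminus\mathcal{A}(1,1+\frac{1}{\alpha})\subset\mathbb{R}\setminus BR_\alpha$. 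Monotonicity of $C_\sigma$-capacity under inclusion requires the \emph{reverse} containment $\mathbb{R}\setminus BR_\alpha\subset\mathbb{R}\setminus\mathcal{A}(1,1+\frac{1}{\alpha})$ in order to transfer the conclusion of Theorem \ref{th1} to $\mathbb{R}\setminus BR_\alpha$. So the proposition does not ``identify'' the two sets in the direction you need, and neither your proposal nor the paper supplies the missing inclusion.

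Moreover, the missing inclusion appears to be false rather than merely unproved. Since $\ln\Psi_\omega(Q)$ is comparable to $\ln Q_{n+1}$ for \emph{every} $Q\in[Q_n,Q_{n+1})$, integrating against $Q^{-1-1/\alpha}$ over that whole block contributes on the order of $\ln Q_{n+1}\,\bigl(Q_n^{-1/\alpha}-Q_{n+1}^{-1/\alpha}\bigr)$, so the natural discrete counterpart of the $\alpha$-Brjuno--R\"ussmann integral is $\sum_n \ln Q_{n+1}/Q_n^{1/\alpha}$, not $\sum_n \ln Q_{n+1}/Q_n^{1+1/\alpha}$. Choosing a continued fraction with $\ln Q_{n+1}\sim Q_n^{\frac{1}{\alpha}+\frac{1}{2}}$ gives $\nu\in\mathcal{A}(1,1+\frac{1}{\alpha})$ (terms $\sim Q_n^{-1/2}$, summable) while the integral diverges (block contributions $\sim Q_n^{1/2}$), so $\mathbb{R}\setminus BR_\alpha$ is strictly larger than $\mathbb{R}\setminus\mathcal{A}(1,1+\frac{1}{\alpha})$ and zero capacity of the latter does not pass to the former. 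To repair the argument one should instead compare $BR_\alpha$ with $\mathcal{A}(1,\frac{1}{\alpha})$, for which $\mathbb{R}\setminus BR_\alpha\subset\mathbb{R}\setminus\mathcal{A}(1,\frac{1}{\alpha})$ does hold; Theorem \ref{th1} then yields the kernel exponent $2\alpha$ rather than $\frac{2\alpha}{\alpha+1}$ (consistently with the known exponent $2$ for the Brjuno set at $\alpha=1$). This defect is inherited from the paper, which also presents the corollary as a bare substitution, but your proposal does not close it.
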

 
\end{document}